\newtheorem{example}{Example}
\newtheorem{remark}{Remark}
\newtheorem{theorem}{Theorem}
\newtheorem{proposition}{Proposition}
\newtheorem{assumption}{Assumption}
\newcommand{\stt}{\text{ s.t. }}
\newcommand\withdvi[1]{#1} 
\newcommand\Sos{{\operatorname{\text{\small $\Sigma$}}}}
\newcommand{\inTR}[1]{}
\title{Optimal Power Flow \\ as a Polynomial Optimization Problem}
\author{Bissan Ghaddar, Jakub Marecek, Martin Mevissen
\thanks{IBM Research - Ireland, Damastown Industrial Estate, Mulhuddart, Dublin 15.
email: bghaddar@ie.ibm.com, jakub.marecek@ie.ibm.com, martmevi@ie.ibm.com }
}
\begin{document}

\maketitle

\begin{abstract}
Formulating the alternating current optimal power flow (ACOPF) as a polynomial optimization problem makes it possible to solve large instances in practice and to guarantee asymptotic convergence in theory. 

We formulate the ACOPF as a degree-two polynomial program and study two approaches to solving it via convexifications. In the first approach, we tighten 
the first order relaxation of the non-convex quadratic program by adding valid inequalities. 
In the second approach, we exploit the structure of the 
polynomial program by using a sparse variant of Lasserre's hierarchy.
This allows us to solve instances of up to 39 buses to global optimality
and to provide strong bounds for the Polish network within an hour.
\end{abstract} 


\section{Introduction}

Optimal Power Flow (OPF) in alternating current models (ACOPF) is one of the most important 
power system optimization problems. Various optimization methods have been widely used to tackle this hard problem \cite{LavaeiTZ2014,zhang2013}. There are numerous extensions of the problem of widely varying tractability, including security-constrained variants taking into account uncertainty \cite{capitanescu2011}. Even the ACOPF alone, however, is a large-scale non-convex non-linear optimization problem, and hence challenging to solve.

\inTR{ There were 22,126 TWh of electric energy generated world-wide in 2011, according to the International Energy Association \cite{ieawebsite}.  If one assumes production costs of US\$ 30 per MWh, this amounts to over \$663,780,000,000M dollars per year in expenses. If one were able to optimize the energy generation even by a small fraction, this would lead to substantial savings in terms of absolute expenses. 

Although there has been no single formulation and solution approach suitable for all the various forms of OPF problems, 
many OPF formulations take the form of a polynomial programming (PP),
where the objective, equality constraints and inequality constraints are all given by 
multi-variate polynomials. 
Equality constraints typically include the power flow network equations and balance constraints. 
The inequality constraints often include active/reactive power generation limits, demand constraints, bus voltage limits, and branch flow limits. }

While non-linear formulations for OPF capture the system behavior more accurately than linearization, in principle, they pose a challenge for the solvers, which 
often fail to find the global optimum, or do not guarantee to have found the global optimum. 
A great variety of 
relaxations and solution methods to solve the OPF problem has been tested, 
including non-linear programs, piece-wise linearization, Lagrangian relaxations, genetic algorithms, and interior point methods. For examples, please see surveys \cite{Pandya2008, Low2014}. 
\inTR{In a comprehensive computational study \cite{castillo2013}, the staff of Federal Energy Regulatory Commission have tested the performance of five leading optimization solvers (Conopt, Ipopt, Knitro, Minos, and Snopt) on instances of up to 3,120 buses, but none of the solvers has produced a feasible solution in more than $82 \%$ of experiments and the best solution out of those known in more than $73 \%$ of experiments. 
A number of recent relaxations makes it possible to guarantee the global
optimality in some cases, though.}
A recent line of research proposed by Bai et al. \cite{Bai2008} applied semidefinite programming (SDP) to the OPF problem. Lavaei et al. \cite{lavaei2012zero, sojoudi2013semidefinite} then showed that the solution of the SDP is the global optimum, under some conditions. Several follow-up computational studies \cite{Jabr2012_2,Molzahn2011} increased the  dimension of SDP instances that can be solved, in practice.
\inTR{
\begin{table*}
\begin{tabularx}{\textwidth}{Xrrrrr}
\hline
Termination & Conopt & Ipopt & Knitro & Minos & Snopt \\ \hline
Exceeded time limit with an infeasible solution & 7.9\% & 22.2\% & 38.4\% & 0.0\% & 23.0\% \\
Exceeded time limit with a feasible solution & 6.5\% & 0.9\% & 6.7\% & 0.0\% & 14.4\% \\
Early termination with an infeasible solution & 0.0\% & 0.4\% & 0.6\% & 11.4\% & 0.0\% \\
Early termination with a feasible solution & 31.2\% & 0.6\% & 1.8\% & 2.3\% & 0.2\% \\
Normal termination with an infeasible solution & 10.5\% & 3.4\% & 0.0\% & 55.9\% & 1.6\% \\ \hline
Percentage of feasible & 81.6\% & 74.1\% & 61.0\% & 32.7\% & 75.4\% \\ \hline
Percentage of best seen & 43.8\% & 72.5\% & 52.5\% & 30.3\% & 60.8\% \\ \hline 
\end{tabularx}\\[3mm]
\caption{Performance of leading MINLP solvers on a benchmark collection of ACOPF instances, as reported by Castillo and O'Neill \cite{castillo2013}.
118--3120 bus instances, 100 runs of 20 minutes per instance and solver, Xeon E7458 at 2.4GHz with 64 GB RAM.}
\end{table*}
}

Although there has been no single formulation and solution approach suitable for all the various forms of OPF problems, 
many OPF formulations take the form of a polynomial programming (PP),
where the objective, equality constraints and inequality constraints are all given by 
multi-variate polynomials. 
Equality constraints typically include the power flow network equations and balance constraints. 
The inequality constraints often include active/reactive power generation limits, demand constraints, bus voltage limits, and branch flow limits. 
Using polynomial optimization, one can model the network more accurately, and 
obtain globally valid lower bounds and globally optimal solutions, under mild conditions.
Notably, one can use a wide variety of objective functions,
 and incorporate further constraints easily,
 without affecting convergence properties. 

For example, besides the minimization of power generation costs, 
other objectives can be formulated using PP, 
including minimization of power generation costs with unit commitment costs,
minimization of system losses, 
and 
maximization of power quality (minimizing voltage deviation).
Computationally, one uses a hierarchy of SDP relaxations of Lasserre \cite{Lasserre1}
 to convexify the PP problem. This approach was used for the OPF problem to improve the Lavei-Low bounds (See \cite{Molzahn2013,RTE2013}).
Unfortunately, the dimension of these relaxations grow rapidly with the size of the power system, 
 posing a major computational challenge.

In this paper, we present two techniques for tackling the OPF problem. 
\inTR{
The first one is generic and can be applied to polynomial programs of any structure. 
The second technique benefits from the structure of the underlying polynomial optimization problem.
}
The first technique uses ``cutting surfaces'' of Ghaddar et al. \cite{BissanThesis}, which are valid inequalities, generated dynamically upon violation at each step of the algorithm. 
Instead of increasing the degree of the non-negative certificates, as in Lasserre's hierarchy, the set of polynomial inequalities describing the feasible region of the polynomial program is changed in each iteration, while the degree of the polynomials is fixed. 
These valid inequalities yield stronger convexifications and hence tighter bounds than the Lavaei-Low \cite{lavaei2012zero} SDP relaxation.


The second technique uses the sparse hierarchy of SDP relaxations of Waki et al. \cite{WKKM}, which improves the tractability of the Lasserre's hierarchy by exploiting sparsity of the OPF problem. 
The relaxations are equivalent to the Lavaei-Low \cite{lavaei2012zero} SDP relaxation, where it is exact, and provide tighter relaxations, where it is not (i.e., as the level of the hierarchy increases). 
Further, we employ matrix completion techniques \cite{KKMY} to break down the largest SDP matrix at the price of introducing additional equality constraints and several smaller matrix inequalities,
to make the approach scale to power systems with thousands of buses.

Overall, the main contributions of the paper are:
\begin{itemize}
\item stronger convexifications for the OPF problem than those presented in the literature
 \item larger instances than those published in the literature are solved to global optimality by exploiting structured sparsity of the OPF problem
  \item proof of convergence of the sparse hierarchy of SDP relaxations for OPF.
\end{itemize}
Notably, either of the presented techniques improves upon the Lavaei-Low SDP relaxation, whenever the Lavaei-Low SDP relaxation does not provide the global optimum.  


\section{Optimal Power Flow Problem}\label{sec:opf}
We use the same notation as in \cite{lavaei2012zero} and \cite{Molzahn2011}. The topology of the power system $P = (N, E)$ is represented as an undirected graph, where each vertex $n \in N$ is called a ``bus'' and each edge $e \in E$ is called a ``branch''. 
We use $|N|$ to denote the number of buses and $|E|$ to denote the number of branches. Let $G \subseteq N$ be the set of generators and $E \subseteq N \times N$ be the set of all branches modeled as $\Pi$-equivalent circuits. 
The matrix $y \in \mathbb{R}^{|N|\times|N|}$ represents the network admittance matrix, whose sparsity pattern is the same as that of the adjacency matrix of $P$. $\bar{b}_{lm}$ is the value of the shunt element at branch $(l,m) \in E $ and $g_{lm}+jb_{lm}$ is the series admittance on a branch $(l,m)$. Let $S^d_k=P^d_k + jQ^d_k$ be the active and reactive load (demand) at each bus $k \in N$ and $P^g_k + jQ^g_k$ represent the apparent power of the generator at bus $k \in G$. Define $V_k =\Re{V_k}+ j \Im{V_k}$ as the voltage at each bus $k \in N$ and $S_{lm}=P_{lm}+jQ_{lm}$ as the apparent power flow on the line $(l,m) \in E$. The edge set $L \subseteq E$ contains the branches $ (l,m)$ such that the apparent power flow limit is less than a certain given tolerance $\varepsilon$. \inTR{When the output of at most one generator is a variable and the objective function is related to losses, the problem is known as the minimum loss power flow. When the output of more than one generator is variable, within given bounds, and the objective is related to the cost of generation, the problem is known as the optimal power flow problem.} 


\subsection{Formulation}
We focus on the rectangular power-voltage formulation, where 
\begin{itemize}
\item $P_k^{\min}$ and $P_k^{\max}$ are the limits on active generation capacity at bus $k$, where $P_k^{\min}=P_k^{\max} =0$ for all $k \in N / G$.
\item $Q_k^{\min}$ and $Q_k^{\max}$ are the limits on reactive generation capacity at bus $k$, where $Q_k^{\min}=Q_k^{\max} =0$ for all $k\in N / G$. 
\item $V_k^{\min}$ and $V_k^{\max}$ are the limits on the absolute value of the voltage at a given bus $k$. 
\item $S_{lm}^{\max}$ is the limit on the absolute value of the apparent power of a branch $(l,m)\in L$.
\end{itemize}

\inTR{The network model is constrained by the power flow equations:
\begin{align}
P^g_k &= P_k^d+\Re{V_k} \sum_{i=1}^n ( { \Re{y_{ik}} \Re{V_i} - \Im{y_{ik}} \Im{V_i} }) \notag \\
    & + \Im{V_k} \sum_{i=1}^n ({ \Im{y_{ik}} \Re{V_i} - \Re{y_{ik}} \Im{V_i} }) \label{eqn:Pk} \\
Q^g_k &= Q_k^d+\Re{V_k} \sum_{i=1}^n ({ - \Im{y_{ik}} \Re{V_i} - \Re{y_{ik}} \Im{V_i} }  ) \notag \\
    & + \Im{V_k} \sum_{i=1}^n ({ \Re{y_{ik}} \Re{V_i} - \Im{y_{ik}} \Im{V_i} }) \label{eqn:Qk}
\end{align}
In terms of power line flows we define the following:
\begin{align}
P_{lm} &= b_{lm} ( \Re{V_l} \Im{V_m} - \Re{V_m} \Im{V_l}) \label{eqn:Plm}  \\
    &+ g_{lm}( \Re{V_l}^2 +\Im{V_m}^2- \Im{V_l} \Im{V_m}- \Re{V_l} \Re{V_m}) \notag \\
Q_{lm}  &= b_{lm} ( \Re{V_l} \Im{V_m} - \Im{V_l} \Im{V_m}-\Re{V_l}^2 -\Im{V_l}^2) \notag \\
    &+ g_{lm}( \Re{V_l}\Im{V_m}- \Re{V_m} \Im{V_l}- \Re{V_m} \Im{V_l})  \notag \\
  & -\frac{\bar{b}_{lm}}{2}(\Re{V_l}^2 +\Im{V_l}^2) \label{eqn:Qlm}
\end{align}
}
Let $e_k$ be the $k^{th}$ standard basis vector in $\mathbb{R}^{|N|}$, similar to \cite{lavaei2012zero}, the following matrices are defined
\begin{align*}
y_k&=e_ke_k^T y, \\
y_{lm}&=(j\frac{\bar{b}_{lm}}{2}+g_{lm}+jb_{lm})e_le_l^T - (g_{lm}+jb_{lm})e_le_m^T, \\
Y_k&= \frac{1}{2} \left[ \begin{matrix} \Re(y_k  + y_k^T)  &\Im(y_k^T -y_k)  \\
   \Im(y_k  - y_k^T))& \Re(y_k  + y_k^T) \end{matrix}  \right], \\
\bar{Y}_k&= -\frac{1}{2} \left[ \begin{matrix} \Im(y_k  + y_k^T)& \Re(y_k -y_k^T) \\
   \Re(y_k^T  - y_k) & \Im(y_k  + y_k^T) \end{matrix}  \right], \\
   M_k&=  \left[ \begin{matrix}e_ke_k^T & 0  \\
   0 & e_ke_k^T \end{matrix}  \right], \\
   Y_{lm}&= \frac{1}{2} \left[ \begin{matrix} \Re(y_{lm}  + y_{lm}^T)& \Im(y_{lm}^T -y_{lm}) \\
   \Im(y_{lm} - y_{lm}^T) & \Re(y_{lm}  + y_{lm}^T) \end{matrix}  \right],\\
   \bar{Y}_{lm}&=- \frac{1}{2} \left[ \begin{matrix} \Im(y_{lm}  + y_{lm}^T)& \Re(y_{lm}^T -y_{lm}) \\
   \Re(y_{lm}^T - y_{lm}) & \Im(y_{lm}  + y_{lm}^T) \end{matrix}  \right].
  \end{align*}

Let $x$ be a vector of variables defined as $x: =[\Re{V}_k \quad \Im{V}_k ]^T$, and let the cost of power generation be  $\sum_{k\in G} f_k(P_k^g)$ where $f_k(P_k^g)= c^2_k (P_k^g)^2 + c^1_k P_k^g + c^0_k$, with $c^2_k, c^1_k, c^0_k$ non-negative. The classical OPF problem 
can be written as a polynomial optimization problem of degree 4,
{\small\begin{align}
\min &\sum_{k\in G} f_k(x) \tag*{[OP$_4$]}  
\\
\stt &P_k^{\min} \leq \text{tr}(Y_kxx^T)+P_k^d \leq P_k^{\max} \inTR{ \: \quad \qquad \qquad \forall i \in N} \notag \\ 
& Q_k^{\min}\leq \text{tr}(\bar{Y}_kxx^T)+Q_k^d  \leq Q_k^{\max} \inTR{ \qquad \qquad \qquad\forall i \in N } \notag \\ 
&(V_k^{\min})^2 \leq \text{tr}(M_kxx^T) \leq  (V_k^{\max})^2 \inTR{ \: \: \:  \quad\qquad\qquad \forall i \in N} \notag \\ 
&(\text{tr}(Y_{lm}xx^T))^2+(\text{tr}(\bar{Y}_{lm}xx^T))^2 \leq (S_{lm}^{\max})^2 \inTR{ \qquad \forall (l,m) \in L} \notag  
\end{align}}
 The objective function often is the cost of power generation where {\small $$f_k(x):=\left(c^2_k(P_k^d + \text{tr}(Y_kxx^T))^2+c^1_k(P_k^d + \text{tr}(Y_kxx^T))+c^0_k\right)$$}. The constraints, in turn, 
impose a limitation on the active and reactive power, 
restrict the voltage on a given bus, 
and limit the apparent power flow at each end of a given line. 
By defining variable $W=xx^T$,
\cite{lavaei2012zero} reformulates the problem as a rank-constrained problem.
Subsequently, one can drop the rank constraint to obtain the SDP relaxation [OP-SDP] as in \cite{lavaei2012zero}. 
\inTR{
\subsection{The Choice of an Objective}

One main advantage of formulating the problem as a polynomial program is that one can optimize a number of objectives over the feasible region defined above, including:
\begin{itemize}
\item generation costs
\item $L_p$-norm of losses
\item $L_p$-norm of the difference from plan 
\end{itemize}
and any combinations thereof. 

In the generation costs objective, one customarily approximates the generation costs at each generator $k$ as a function of the real power $P_k^g$ generated therein, and
sums such approximations across all generators $k \in G$: 
\begin{align}
\label{eq:obj-costs}
\sum_{k \in G} f_k (P^g_k).
\end{align}
Much of the power systems literature deals with convex quadratic $f_k$. However, using polynomial programming the function is not necessarily convex and the coefficients $c^2_k, c^1_k$, and $c_k^0$ are real numbers. 

In the $L_p$-norm loss objective, one computes a norm of the vector $D$ obtained by summing apparent powers $S(l, m) + S(m, l)$ for all $(l, m) \in L$ for : 
\begin{align}
\label{eq:obj-loss}
|| D ||_p = \bigg( \sum_{(l, m) \in E} |  S(l, m) + S(m, l) |^p \bigg)^{1/p}
\end{align}
customarily using $p = 1$. Notice that although such an objective may seem unrealistic at first, a related measure of performance is often employed by in contracts between distribution system operators and regulators or governments.

In the $L_p$-norm of the difference from plan, there exists a plan $\hat P$ of real powers $P = (P_k^g)$ to be generated at each generator $k \in G$, 
and one tries to minimize a norm of the deviations from the plan:
\begin{align}
\label{eq:obj-plan}
|| P - \hat P ||_p = \bigg( \sum_{k \in G} |  P^g_k - \hat P^g_k |^p \bigg)^{1/p}
\end{align}
Notice that for $\hat P \not = 0$, this function is not monotonic in any $P^g_k$. This makes it unsuitable for some approaches based on semidefinite programming, 
but Josz et al. \cite{RTE2013} show that it can be accommodated by semidefinite relaxations based on Lasserre's moment-sos approach. 
}

\inTR{Before we do so, however, let us stress the 
advantages of the SDP relaxation. Notably, the solving the dual problem is very efficient, in contrast to the primal, and there is sparsity in both the primal and the dual. 
The primal problem of [OPF-SDP] has 
$O(6|N| + 2|L| + |G|+1)$ constraints, including $2|L|+|G|+ 1$ SDP constraints, whereas its dual has the same number of variables but  only $O(1 + |G| + 2|L|)$ constraints,
where $|G| \ll |N|$ is the number of generators and $|L|\le |E|$ is the number of apparent power flow constraints \eqref{eqn:4}.
Considering the $m \times m$ dense Schur complement matrix formed in interior point methods for the
primal problem, it is clear 
that solving the dual SDP relaxation is more efficient,
as corroborated by Table \ref{tb:LLSparse}.
This observation will be utilized in our polynomial programming 
approach as well.
The sparsity pattern, is due to the sparse structure of the power network. SparseCoLO \cite{KKMY} utilizes domain-space sparsity of a semidefinite matrix variable and range-space sparsity of a linear matrix inequality constraint. 
It can be used as a preprocessor, which reduces the dimension of matrix variables in an SDP relaxation before applying an SDP solver, such as SeDuMi \cite{Sturm}. The advantages of this approach can be seen in Table \ref{tb:LLSparse}, where IEEE instances and the Polish network instances \cite{Bukhsh2013} are used in comparison of the Lavaei-Low SDP relaxation in primal and dual form, with and without SparseCoLO. The total time of solving the SDP using SparseCoLO and SeDuMi (T$_{\text{sparse}}$), is less than that of SeDuMi alone (T$_{\text{original}}$), for all instances tested. 
Notice that the Polish network instances can not be solved directly using SeDuMi, 
which makes the exploitation of sparsity vital. 
\begin{table*}[tph]
\scriptsize
\begin{center}
\setlength{\tabcolsep}{0.07in}
\begin{tabular}{l|r|rrr|rrr}
& & \multicolumn{3}{c}{[OPF-SDP] (Primal) }&\multicolumn{3}{c}{[OPF-SDP] (Dual) }\\
Instance & Obj & Dim & T$_{\text{original}}$ & T$_{\text{sparse}}$ & Dim & T$_{\text{original}}$ & T$_{\text{sparse}}$  \\
\hline
\hline
case9mod &	2753.23		&	552$\times$174		&	1.6	&	1.6	&	552$\times$168		&	1.2	&	1.0	\\
case14&	8081.52		&	888$\times$411		&	1.5	&	0.6	&	888$\times$94		&	0.9	&	0.4	\\
case30&	576.89		&	4542$\times$1836	&	20.0	&	20.7	&	4542$\times$684	&	2.2	&	2.0	\\
case39 &    41862.08	&  	7114$\times$3091	&  	358.5  &	174.0	&	7114$\times$758	&	8.4	& 	3.3	\\
case57&	41737.79	&	13366$\times$6562	&	1468.6	&	1235.3	&	13366$\times$356	&	1.3	&	1.2	\\
case118&	129654.62 	&	56620$\times$28020	&	*	&	*	&	56620$\times$816	&	9.2	&	5.9	\\
case300&	719711.63	&	362076$\times$180369	&	*	&	*	&	362076$\times$1938	&	94.0	&	9.7	\\
case2383wp& 1.814$\times10^6$	& 	*& * & 	* & 22778705$\times$47975 & *& 3064.7 \\
case2736sp& 1.307$\times10^6$	& 	*& * & 	* & 30019740$\times$57408 & *& 3433.1 \\
\end{tabular}
\caption{Computational time comparison of primal and dual with and without sparsity. (*) implies that instance did not finish within 1hr of time limit.} \label{tb:LLSparse}
\end{center}
\end{table*}}

\section{Polynomial Programming Approach}\label{sec:pp}
The OPF problem is a particular case of a polynomial optimization problem of the form: \inTR{whose objective and constraints are multivariate polynomials. In general, a polynomial program has the following form:}
\begin{align}
\min \quad& f(x) \notag \\
\mbox{s.t.  }& g_i(x) \geq 0 \qquad i=\{1,\dots,m\} \tag*{[PP]}
\end{align}
Motivated by the seminal work of Lasserre \cite{Lasserre1}, there has been a lot of recent research activity to devise solution schemes to solve polynomial optimization problems. The main idea of these schemes is based on applying representation theorems from algebraic geometry to characterize the set of polynomials that are nonnegative on a given domain. 
Given $S\subseteq \mathbb{R}^n,$ define $\mathcal{P}_d(S)$ to be the cone of polynomials of degree at most $d$ that are non-negative over $S$.  We use $\Sigma_d$ to denote the cone of polynomials of degree at most $d$ that are sum-of-squares of polynomials. 
Using $\mathcal{G}=\{g_i(x): i=1,\dots,m \}$ and denoting $S_{\mathcal{G}}=\{x \in \mathbb{R}^n :  g(x) \geq 0, \; \forall g \in \mathcal{G}\}$ the basic closed semi-algebraic set defined by $\mathcal{G}$, we can rephrase [PP] as
\begin{align}
\max \quad & \varphi & \mbox{s.t.  }& f(x)-\varphi  \geq 0 \quad \forall \: x \in S_G, \notag \\
= \max \quad & \varphi & \mbox{s.t.  } &f(x)-\varphi \in \mathcal{P}_d(S_{\mathcal{G}}). \tag*{[PP-D]}
\end{align}

Although [PP-D] is a conic problem, it is not known how to optimize over the cone $\mathcal{P}_d(S_{\mathcal{G}})$ efficiently. Lasserre \cite{Lasserre1} introduced a hierarchy of SDP relaxations corresponding to liftings of polynomial problems into higher dimensions. In the hierarchy of SDP relaxations, one convexifies the problem, obtains progressively stronger relaxations, but the size of the SDP instances soon becomes computationally challenging. 
Under assumptions slightly stronger than compactness, the optimal values of these problems converge to the global optimal value of the original problem, [PP]. 

The approximation of $\mathcal{P}_d(S_{\mathcal{G}})$
used in \cite{Lasserre1} is the cone $\mathcal{{K}}^r_{\mathcal{G}}$, where
{\small \begin{align}
\mathcal{{K}}^{r}_{\mathcal{G}} =  \Sos_{r}+\sum_{i=1}^{m}g_i(x) \Sos_{r-\deg(g_i)}, \label{eqn:K} 
\end{align}}
and $r \geq d$. 
The corresponding optimization problem over $S$ can be written as:
{\small {\begin{align}
\max_{\varphi, \sigma_i(x)} \:  & \varphi \tag*{[PP-H$_r$]$^*$} \label{eq-Lass} \\
\mbox{s.t. } & f(x)-\varphi= \sigma_0(x)+\sum_{i=1}^{m} \sigma_i(x)g_i(x) \notag\\
&\sigma_0(x) \in \Sos_{r}, \: \sigma_i(x) \in \Sos_{r-\deg(g_i)}. \notag  
\end{align}}}
\ref{eq-Lass} can be reformulated as a semidefinite optimization problem. We denote the dual of [PP-H$_r$]$^*$ by [PP-H$_r$]. The computational cost of the problem clearly depends on both the degree of the polynomials, $r$, and the dimension of the problem. The number of constraints can be large, especially when many variables and high-degree polynomials are used. Based on the described approach, Molzahn and Hiskens \cite{Molzahn2013} and Josz et al. \cite{RTE2013} used [OP$_4$] and applied Lasserre's hierarchy to obtain global optimality on instances with up to 5 and 10 buses respectively, where Lavaei-Low is not globally optimal. 

\subsection{Relationship with Lavaei-Low Formulation} \label{sec:LLD}
In this work, instead of starting with [OP$_4$] and 
applying the hierarchy [PP-H$_r$]$^*$, we reduce the OPF problem to a polynomial program of degree 2:
{\small  {\begin{align*}
\min &\sum_{k\in G} \left(c^2_k(P^g_k)^2+c^1_k(P_k^d + \text{tr}(Y_kxx^T))+c^0_k\right) \notag \\
&P_k^{\min} \leq \text{tr}(Y_kxx^T)+P_k^d \leq P_k^{\max} \tag*{[OP$_2$]}\\
& Q_k^{\min}\leq \text{tr}(\bar{Y}_kxx^T)+Q_k^d\leq Q_k^{\max} \notag \\
&(V_k^{\min})^2 \leq \text{tr}(M_kxx^T) \leq  (V_k^{\max})^2 \notag \\
&P_{lm}^2+Q_{lm}^2 \leq (S_{lm}^{\max})^2 \notag \\
& P^g_k = \text{tr}(Y_kxx^T) +P_k^d \notag \\
& P_{lm} = \text{tr}(Y_{lm}xx^T) \notag \\
& Q_{lm} = \text{tr}(\bar{Y}_{lm}xx^T). \notag 
\end{align*}}}
\begin{theorem}
\label{thm:equivalence}
The first level of the hierarchy for [OP$_2$], [OP$_2$-H$_1$]$^*$, is equivalent to the dual of [OP-SDP], i.e., Optimization 4 in \cite{lavaei2012zero}.
\end{theorem}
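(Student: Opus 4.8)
The plan is to recognize that, once the auxiliary quantities $P_k^g$, $P_{lm}$ and $Q_{lm}$ are treated as decision variables, [OP$_2$] is a quadratically constrained quadratic program (QCQP): its objective and every constraint — including the apparent-power-flow constraint $P_{lm}^2 + Q_{lm}^2 \le (S_{lm}^{\max})^2$ and the quadratic cost term $(P_k^g)^2$ — are quadratic in the extended variable $z := (x, (P_k^g)_{k\in G}, (P_{lm}, Q_{lm})_{(l,m)\in L})$, while the defining relations such as $P_k^g = \text{tr}(Y_k xx^T) + P_k^d$ are quadratic equalities. For such a problem the first level of the hierarchy is the one in which $\sigma_0$ is a quadratic sum of squares, $\sigma_0 \in \Sigma_2$, certified by a single positive-semidefinite Gram matrix, and in which each multiplier $\sigma_i \in \Sigma_{2-\deg g_i} = \Sigma_0$ reduces to a nonnegative scalar for an inequality and a free scalar for an equality. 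Thus [OP$_2$-H$_1$]$^*$ is exactly the Lagrangian-type SOS problem of this QCQP.

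First I would write out the level-one identity $f(z) - \varphi = \sigma_0(z) + \sum_i \lambda_i g_i(z)$ and equate coefficients of the monomials $1$, $z_a$ and $z_a z_b$. Since $\sigma_0 \in \Sigma_2$ is represented by a single Gram matrix $Q \succeq 0$, the coefficient-matching relations are linear in $Q$, the scalars $\lambda_i$ and $\varphi$, so [OP$_2$-H$_1$]$^*$ is an SDP whose only conic constraint is $Q \succeq 0$. This $Q$ is indexed by $1$ together with the components of $z$: its pure-$x$ block is the dual slack of the lifting $W = xx^T$, its $(P_k^g)$ entries produce the $2\times 2$ Schur-complement block that linearizes the quadratic cost, and its $(P_{lm}, Q_{lm})$ entries produce the block that linearizes the second-order-cone constraint $P_{lm}^2 + Q_{lm}^2 \le (S_{lm}^{\max})^2$.

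Next I would write down Optimization 4 of Lavaei-Low, the SDP dual of the primal relaxation [OP-SDP] obtained by setting $W = xx^T$ and dropping the rank constraint. Its variables are the multipliers attached to the two-sided active, reactive and voltage constraints and to the flow limits, and its matrix inequality asserts that the dual slack $\sum_i \lambda_i A_i - A_0 \succeq 0$, where the $A_i$ are the data matrices $Y_k, \bar{Y}_k, M_k, Y_{lm}, \bar{Y}_{lm}$ of [OP$_2$]. The proof then amounts to a term-by-term identification: each physical multiplier of Optimization 4 is matched with the corresponding scalar $\lambda_i$ of the SOS problem, and the Lavaei-Low dual slack is matched with the Gram matrix $Q$, block by block, including the auxiliary Schur-complement and second-order-cone blocks. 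Equivalently, one may match the primal moment relaxation [OP$_2$-H$_1$] with the Shor relaxation [OP-SDP] directly and then dualize, which makes the $W \succeq 0$ correspondence most transparent.

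The step I expect to be the main obstacle is precisely this matching around the auxiliary variables. Lavaei-Low keep the compact $2|N|\times 2|N|$ object $W = xx^T$ and fold the quadratic cost and the apparent-power constraints into separate small positive-semidefinite blocks, whereas the generic level-one relaxation of the QCQP also carries first and second moments of $P_k^g, P_{lm}, Q_{lm}$. I would therefore have to show that the quadratic equalities eliminate the first moments of the auxiliary variables in terms of $W$, and that the remaining second-moment entries appear in both formulations as exactly the same small positive-semidefinite blocks, so that the two SDPs — and hence their duals — coincide. Once this identification is in place, the equivalence of [OP$_2$-H$_1$]$^*$ and Optimization 4 follows, with Slater's condition on the compact feasible region supplying the strong duality needed for the optimal values to agree.
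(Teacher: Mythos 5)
Your proposal follows essentially the same route as the paper's proof: write the level-one certificate with scalar multipliers (nonnegative for inequalities, free for equalities), equate coefficients of all monomials, and identify the resulting SDP block-by-block with Optimization 4 of Lavaei--Low, including the dual slack for $W=xx^T$, the $2\times 2$ cost blocks, and the $3\times 3$ flow blocks. The only notable differences are cosmetic: the paper writes the certificate directly in the decomposed form $A(x)+\sum_k B_k(P^g_k)+\sum_{(l,m)}C_{lm}(P_{lm},Q_{lm})$ (justified because cross-monomials such as $x_iP^g_k$ never appear, so coefficient matching forces the corresponding Gram entries to vanish), which sidesteps the block-splitting step you flag as your main obstacle, and your closing appeal to Slater's condition is unnecessary, since the two programs turn out to be literally the same SDP after variable substitution, so no strong-duality argument is needed.
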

Hence, the first level of the hierarchy for [OP$_2$] (i.e., $r=2$), [OP$_2$-H$_1$]$^*$ provides the same bound as [OP-SDP] while the first level of the hierarchy for [OP$_4$] (i.e., $r=4$), provides a bound that is at least as good as [OP-SDP].


\subsection{Inequality Generation Approach}\label{sec:digs}
Realising that ACOPF can be modeled as a polynomial program, we aim to tackle the problem using methods recently developed in polynomial optimization. 
The first method, discussed in this section, is a dynamic approach, which generates valid, but violated inequalities at each step of the algorithm. The idea is based on the work of Ghaddar et al. \cite{Ghaddar2011}, who proposed the dynamic inequality generation scheme (DIGS) for general PP. 
In DIGS, 
the current solution is used to generate polynomial inequalities that are valid on the feasible region of the PP problem. 
This iterative scheme makes it possible to generate improving approximations without growing the degree of the certificates involved, and hence the size of the SDP problem.

In this section, we use the first level of the relaxation of [OP$_2$], i.e. [OP$_2$-H$_1$], and add valid quadratic inequalities of the form $p(x) \geq 0$. 
The polynomial $p(x)$ needs to be a valid inequality and at the same time improve on the bound of the relaxation. This can be translated as $ p \in \mathcal{P}_d(S) \setminus \mathcal{K}^d_{\mathcal{G}}$, where $d=2$, the degree of [OP$_2$], in this case. 
The iterative scheme can be summarized as follows:
\begin{itemize}\label{ProcedureCut}
 \item Start with ${\mathcal{G}}_0={\mathcal{G}}$
 \item Given ${\mathcal{G}}_i$ let $p_{i}  \in \mathcal{P}_d(S_{\mathcal{G}}) \setminus \mathcal{K}^d_{{\mathcal{G}}_{i}},$ define ${\mathcal{G}}_{i+1}={\mathcal{G}}_{i} \cup \{ p_{i} \}.$ \\
\end{itemize}
To be able to generate a polynomial $p(x)$, the scheme consists of a master problem and a subproblem. The master problem is of the same form as \ref{eq-Lass} with the hierarchy level (i.e., $r$) being fixed to $d$:
 \begin{align}
\max \: & \varphi & \mbox{ s.t. } & f(x)-\varphi \in \mathcal{{K}}^{d}_{\mathcal{G}}, \tag*{[PP-M]}
 \end{align}
 where $ \mathcal{{K}}^{d}_{\mathcal{G}}$ is as defined in \eqref{eqn:K} with $r$ fixed to $d$. 
The master problem provides lower bounds. 
The subproblem uses the optimal dual information from the master problem, $Y$, to generate polynomial inequalities that are valid on the feasible region: 
 \begin{align}
\min_p \: & \left<p, Y \right>  
 &  \mbox{ s.t. } & p \in \mathcal{K}_{\mathcal{G}}^{{d}+2}\cap \mathbf{R}_{d}[x]. \tag*{[PP-S]}
 \end{align}
These valid inequalities are then incorporated into the master problem, to construct new non-negativity certificates, 
obtaining better approximations of the OPF.
The iterative scheme terminates when the objective function of the subproblem is sufficiently close to 0 \cite{BissanThesis}. 

Considering [OP$_2$-H$_1$] is equivalent to the Lavaei-Low SDP relaxation,
as seen in Section \ref{sec:LLD}, one can improve on the Lavaei-Low bound
by adding valid inequalities to [OP$_2$]. In some cases, one can obtain the global optimum. 
As opposed to the approach proposed in \cite{RTE2013,Molzahn2013}, where the hierarchy level, $r$, is increased \eqref{eq-Lass}, in this case, $r$ is fixed to $d$. 
That is: Instead of increasing the degree of the non-negative certificates, 
the degree of the polynomials is fixed and the set of polynomial inequalities describing the feasible region of the polynomial program is increased. 
Valid inequalities are used to construct new certificates that provide better approximations and hence provide stronger convexification at each iteration. 
Consequently, the relaxation is improving at each stage of the algorithm and hence better bounds are obtained at each iteration, while sizes of the positive semidefinite matrices and the numbers of constraints can be significantly lower as compared to \ref{eq-Lass}.
\subsection{Exploiting OPF Structure} \label{sec:sparse} 
The current scalability of state-of-the-art SDP solvers limits the tractability of the Lasserre hierarchy even for medium-scale polynomial programs. 

One approach to improve the tractability of the Lasserre hierarchy is to exploit {\it correlative} sparsity of a polynomial optimization problem [PP] of dimension $n$ due to Waki et al. \cite{WKKM}, which can be represented by the symbolic $n\times n$ {\it correlative sparsity pattern matrix} $R$,  defined by
$$R_{ij} = \begin{cases} 
\star& \text{for } i=j\\ 
\star& \text{for } x_i, x_j \text{ in the same monomial of } f \\ 
\star& \text{for } x_i, x_j \text { in the same constraint } g_k \\ 
0 & \text {otherwise},
\end{cases} $$

and its associated adjacency graph $G$, the {\it correlative sparsity pattern graph}. Let $\{I_k \}_{k=1}^p$ be the set of maximal cliques of a chordal extension of $G$ following the construction in \cite{WKKM}, i.e. $I_k \subset \{ 1,\ldots,n \}$. Given that a chordal extension of arbitrary graphs is not unique, it is important to choose a chordal extension which can be computed efficiently while keeping the number of additional edges as small as possible, since the size of matrix inequalities in the sparse hierarchy is determined by the cardinality of the maximal cliques $I_1,\ldots, I_p$. Note, that the ordering $O$ applied to $R$ determines the chordal extension of $R$, and hence the number and cardinality of the maximal cliques $\{I_k \}_{k=1}^p$. The problem of minimising the size of the sparse hierarchy of SDP
relaxations for [PP] is therefore equivalent to finding the ordering which results in a symbolic Cholesky factorisation with the minimal number of fill-ins, or finding the chordal extension of $G$ with the minimal number of edges added.  While the problem of finding the minimal chordal extension is NP-hard, a number of heuristics for orderings have been proposed that aim to keep the number of fill-ins in the symbolic Cholesky factorisation such as the symmetric approximate minimum degree ordering. The sparse approximation of $\mathcal{P}_d(S)$ is $\mathcal{{K}}^{r}_{\mathcal{G}}(I)$, given by
\begin{align*}
  \mathcal{{K}}^{r}_{\mathcal{G}}(I) = \sum_{k=1}^p \left(\Sigma_r(I_k) +  \sum_{j\in J_k} g_j \Sigma_{r-\deg(g_j)}(I_k)\right),
\end{align*}
where $\Sigma_d(I_k)$ is the set of all sum-of-squares polynomials of degree up to $d$ supported on $I_k$ and $(J_1,\ldots, J_p)$ is a partitioning of the set of polynomials $\{g_j\}_j$ defining $S$ such that for every $j$ in $J_k$, the corresponding $g_j$ is supported on $I_k$. The support $ I \subset \{1,\ldots,n\} $ of a polynomial contains the indices $i$ of terms $x_i$ which occur in one of the monomials of the polynomial.  The sparse hierarchy of SDP relaxations is then given by 
{\small\begin{align}
&\max_{\varphi,\sigma_k(x), \sigma_{r,k}(x)} \:   \varphi \tag*{[PP-SH$_r$]$^*$} \label{eq-Lass-sparse}  \\ 
&\mbox{s.t. } f(x)-\varphi=  \sum_{k=1}^p \left(\sigma_{k}(x) +  \sum_{j\in J_k} g_j(x) \sigma_{j,k}(x)\right) \notag \\
&\quad \sigma_{k}\in\Sos_r((I_k)), \sigma_{j, k} \in \Sos_{r-\deg(g_j)}(I_k). \notag 
\end{align}}
We denote the dual of [PP-SH$_r$]$^*$ by [PP-SH$_r$]. In the case that $R$ is sparse, i.e., $\mid I_k \mid \ll n$, then the resulting matrix variables are of size ${\mid I_k \mid + r \choose r }$, instead of  ${ n + r \choose r }$. While \ref{eq-Lass-sparse} provides a weaker relaxation to [PP] than \ref{eq-Lass} for a fixed relaxation order $r$ in general, the asymptotic convergence result for the dense hierarchy extends to the sparse case:
\begin{assumption}
\label{as:Lasserre}
Let $S$ denote the feasible set of a problem of form [PP]. Let $\{I_k \}_{k}$ denote the $p$ maximal cliques of a chordal extension of the sparsity pattern graph of the [PP].
\begin{enumerate}
\item[(i) ]
Then, there is a $M>0$ such that $\parallel x \parallel_{\infty} < M$ for all $x\in S$.
\item[(ii) ]
Ordering conditions for index sets as in Assumption 3.2 (i) and (ii) in \cite{Lasserre2006}.
\item[(iii) ]
Running-intersection-property, c.f. \cite{Lasserre2006}, holds for $\{I_k\}_k$.
\end{enumerate}
\end{assumption}

\begin{remark}
Note, that if $S$ is compact, it is easy to add up to $p$ redundant quadratic inequality to the definition of $S$, s.t. Assumption \ref{as:Lasserre} (i) is satisfied. (ii) can be satisfied by construction and re-ordering of the sets $\{I_k \}_{k}$. The running-intersection-property is satisfied for the maximal cliques of a choral graph, as pointed out in \cite{KojimaMuramatsu2009}. Thus, Assumption \ref{as:Lasserre} is satisfied for both, [OP$_2$] and [OP$_4$]. 
\end{remark}

Now, we can formulate the convergence result.

\begin{proposition}[Asymptotic Convergence]
\label{thm:convergent}
If Assumption \ref{as:Lasserre} holds for the feasible sets of [OP$_2$] and [OP$_4$] respectively, then for the sparse hierarchy [OP$_2$-SH$_r$]$^*$ for [OP$_2$] and [OP$_4$-SH$_r$]$^*$ for [OP$_4$] and their respective duals the following holds: 
\begin{enumerate}
\item[(a) ]
$\inf \text{[OP$_2$-SH$_r$]} \nearrow \min \text{([OP$_2$])} \text{ as } r\rightarrow\infty,$\\
$\inf \text{[OP$_4$-SH$_r$]} \nearrow \min \text{([OP$_4$])} \text{ as } r\rightarrow\infty.$
\item[(b) ]
$\sup \text{[OP$_2$-SH$_r$]$^*$} \nearrow \min \text{([OP$_2$])} \: \text{as} \: r\rightarrow\infty,$\\
$\sup \text{[OP$_4$-SH$_r$]$^*$} \nearrow \min \text{([OP$_4$])} \: \text{as} \: r\rightarrow\infty.$
\item[(c) ]
(i) If the interior of the feasible set of [OP$_4$] is nonempty, there is no duality gap between [OP$_4$-SH$_r$] and [OP$_4$-SH$_r$]$^*$.\\
(ii) There is no duality gap between [OP$_2$-SH$_1$] and [OP$_2$-SH$_1$]$^*$.
\item[(d) ] If [OP$_2$] has a unique global minimizer, $x^*$, then as $r$ tends to infinity the components of the optimal solution of [OP$_2$-SH$_r$] corresponding to the linear terms converge to $x^*$ (an analogous result holds for [OP$_4$]).
\end{enumerate}
\end{proposition}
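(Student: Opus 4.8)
The plan is to recognize that parts (a) and (b) are a direct instance of the sparse convergence theorem of Lasserre \cite{Lasserre2006}, once the hypotheses are in place. The Remark has already verified that Assumption~\ref{as:Lasserre} holds for both [OP$_2$] and [OP$_4$], so the three ingredients that theorem needs — compactness (i), the ordering/overlap condition (ii), and the running-intersection property (iii) for the cliques $\{I_k\}_k$ — are available. First I would note that for a fixed clique structure the cones $\mathcal{K}^r_{\mathcal{G}}(I)$ are nested in $r$, so the optimal values of the sum-of-squares problems [OP$_2$-SH$_r$]$^*$ form a nondecreasing sequence bounded above by $\min$([OP$_2$]), and likewise for the dual moment problems [OP$_2$-SH$_r$]; this gives the monotone ``$\nearrow$'' in both (a) and (b). Then invoking the sparse representation result yields that each limit equals the global minimum. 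I would apply this once with $S$ the feasible set of [OP$_2$] and once with that of [OP$_4$].

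For (c) the two cases are separated precisely because [OP$_2$] carries the defining equality constraints $P^g_k=\text{tr}(Y_kxx^T)+P_k^d$, $P_{lm}=\text{tr}(Y_{lm}xx^T)$ and $Q_{lm}=\text{tr}(\bar{Y}_{lm}xx^T)$, so its feasible set has empty interior and Slater's condition fails there. For (c)(i), I would observe that [OP$_4$] carries no such equalities, so a nonempty interior of its feasible set furnishes a strictly feasible (Slater) point for the moment relaxation [OP$_4$-SH$_r$]; standard conic strong duality then forces zero gap with [OP$_4$-SH$_r$]$^*$. For (c)(ii), since Slater is unavailable, I would instead appeal to Theorem~\ref{thm:equivalence}: at the first level the relaxation coincides with the (sparse) Lavaei-Low SDP and its dual, for which primal/dual attainment with equal value is established in \cite{lavaei2012zero}, and the chordal clique decomposition underlying the sparse hierarchy preserves the optimal value by matrix completion, so no gap is introduced at $r=1$.

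For (d), I would use the moment-sequence interpretation of the dual [OP$_2$-SH$_r$]. By (a) the relaxation values converge to $\min$([OP$_2$]), and by Assumption~\ref{as:Lasserre}(i) the feasible set is compact; hence any weak-$*$ accumulation point of the truncated moment sequences is the moment sequence of a probability measure supported on the set of global minimizers of [OP$_2$]. Uniqueness of $x^*$ forces this measure to be the Dirac mass $\delta_{x^*}$, whose first-order moments are exactly the coordinates of $x^*$. Therefore the components of the optimal solution of [OP$_2$-SH$_r$] corresponding to the linear monomials converge to $x^*$, and the identical argument applied to [OP$_4$] gives the parenthetical claim.

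The hard part will be (c)(ii): because the equality constraints destroy the interior, one cannot simply invoke Slater, and the zero-gap statement rests on the finer structural fact that the clique-decomposed first level reproduces the Lavaei-Low SDP pair without loss. Verifying that the passage from the dense first level to the sparse first level is \emph{lossless} for this specific correlative-sparsity pattern — neither shifting the optimal value nor opening a duality gap — is the delicate point, whereas everything else reduces to citing the sparse convergence theorem and conic-duality theory once Assumption~\ref{as:Lasserre} is in hand.
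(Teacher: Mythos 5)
Your treatment of (a), (b), and (d) follows essentially the same route as the paper: the paper disposes of these claims by citing Theorem~3.6 of Lasserre \cite{Lasserre2006}, and your monotonicity-of-cones argument plus the weak-$*$/Dirac-mass argument for (d) is precisely the content of that theorem. One caveat: in the sparse setting the dual moment vectors only encode clique-wise marginals, so the accumulation point is a priori a family of measures on the cliques $\{I_k\}_k$; it is exactly the running-intersection property of Assumption~\ref{as:Lasserre}(iii) that allows these to be glued into a single measure on the full space, and this is the part of Lasserre's proof you would have to reproduce rather than gloss over. Similarly, your Slater argument for (c)(i) --- a ball inside the interior of the feasible set of [OP$_4$] yields a moment vector with positive-definite moment and localizing matrices, hence strong conic duality --- is the standard proof underlying the result the paper simply cites (Theorem~5 of \cite{KojimaMuramatsu2009}), so here you supply an argument where the paper supplies a reference.

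The genuine divergence, and the genuine gap, is (c)(ii). The paper obtains it by citing the note in Waki et al.\ \cite{WKKM} on the primal and dual sparse SDP relaxations of order 1 for quadratic optimization problems. You instead route through Theorem~\ref{thm:equivalence} and assert that ``primal/dual attainment with equal value is established in \cite{lavaei2012zero}.'' That assertion is not available: what Lavaei--Low prove is conditional --- zero duality gap between the OPF and its dual under their rank-type condition, a condition that fails on exactly the instances this paper targets --- and an unconditional statement that [OP-SDP] and its dual have equal values is an SDP strong-duality claim that itself requires a constraint qualification. As you yourself observe, the equality constraints of [OP$_2$] (and the load-bus equalities hidden in [OP-SDP] through $P_k^{\min}=P_k^{\max}=0$ for $k \notin G$) rule out the obvious Slater argument, so the fact you import from \cite{lavaei2012zero} is precisely of the kind that must be proved, not cited. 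A second, smaller hole: Theorem~\ref{thm:equivalence} identifies only the SOS side [OP$_2$-H$_1$]$^*$ with the dual of [OP-SDP]; to transfer a no-gap statement you also need the moment side [OP$_2$-H$_1$] to coincide with [OP-SDP], which is plausible but is not part of the cited theorem. The paper's citation to \cite{WKKM} sidesteps both issues, because that note addresses the order-1 primal--dual pair of a quadratic program directly; if you want a self-contained proof of (c)(ii), that structural argument (not the Lavaei--Low results) is what needs to be reconstructed.
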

\begin{proof}
(a), (c) and (d) are a Corollary of Theorem~3.6 of Lasserre \cite{Lasserre2006}, (b) (i) follows from Theorem 5 of \cite{KojimaMuramatsu2009}, (b) (ii) from note in \cite{WKKM} on primal and dual sparse SDP relaxation of order 1 for quadratic optimization problems.
\end{proof}

Moreover, for [OP$_2$] the following proposition holds.

\begin{proposition}
The sparse SDP relaxation [OP$_2$-SH$_1$]$^*$ of order one for [OP$_2$] is equivalent to the first order relaxation of the dense Lasserre hierarchy [OP$_2$-H$_1$]$^*$ for [OP$_2$] and the Lavaei-Low dual sdp relaxation.
\end{proposition}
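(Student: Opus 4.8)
The plan is to isolate the one genuinely new ingredient. By Theorem~\ref{thm:equivalence} the dense first-order relaxation [OP$_2$-H$_1$]$^*$ already equals the Lavaei--Low dual SDP, so the whole statement reduces to showing that the sparse first-order relaxation [OP$_2$-SH$_1$]$^*$ attains the same value as [OP$_2$-H$_1$]$^*$. Write $a$ and $b$ for the optimal values of [OP$_2$-SH$_1$]$^*$ and [OP$_2$-H$_1$]$^*$, and $a',b'$ for those of their moment duals [OP$_2$-SH$_1$] and [OP$_2$-H$_1$]. One inequality is immediate: the sparse SOS cone is contained in the dense one, $\mathcal{K}^{2}_{\mathcal{G}}(I)\subseteq\mathcal{K}^{2}_{\mathcal{G}}$, because every clique-supported certificate is in particular a certificate over all variables; maximizing $\varphi$ over the smaller cone gives $a\le b$. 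The reverse inequality is where the work lies, and I would obtain it on the dual side.

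First I would write out both moment relaxations. Each minimizes the same linear functional of the first- and second-order moments $y_i=\mathbb{E}[x_i]$, $y_{ij}=\mathbb{E}[x_ix_j]$, subject to the same linear localizing constraints coming from the quadratic constraints of [OP$_2$]. The decisive structural remark is that, because [OP$_2$] has degree two and correlative sparsity pattern $R$, every monomial appearing in the objective or any constraint involves only a pair $(i,j)$ with $R_{ij}=\star$, hence a pair lying in a common maximal clique $I_k$. Thus both problems see only the ``on-pattern'' moments $y_{ij}$ with $(i,j)$ in the pattern; the cross-clique moments are absent from the objective and all constraints and are therefore free decision variables.

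The two moment relaxations then differ solely in their positivity requirement. The dense problem [OP$_2$-H$_1$] imposes $M\succeq 0$ for the full first-order moment matrix $M$ indexed by $1,x_1,\dots,x_n$; the sparse problem [OP$_2$-SH$_1$] imposes $M_k\succeq 0$ for the clique submatrices indexed by $1$ and $\{x_i: i\in I_k\}$, together with consistency on overlaps. Adjoining to the correlative sparsity graph a vertex for the constant monomial $1$, adjacent to every other vertex, preserves chordality and yields maximal cliques exactly $\{1\}\cup\{x_i:i\in I_k\}$. By the Grone--Johnson--S\'a--Wolkowicz positive-semidefinite completion theorem for chordal patterns \cite{KKMY}, a partial matrix on this pattern has a PSD completion if and only if every clique submatrix is PSD. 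Hence every sparse-feasible point extends to a dense-feasible point, and every dense-feasible point restricts to a sparse-feasible one; since the objective uses only on-pattern moments, the two optima coincide, $a'=b'$.

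It remains to assemble the pieces, and to note the one place demanding care. Weak conic duality gives $b\le b'$, while Proposition~\ref{thm:convergent}(c)(ii) supplies the absence of a duality gap on the sparse side, $a=a'$; chaining, $b\le b'=a'=a$, which together with $a\le b$ forces $a=b$. The main obstacle is precisely the matrix-completion reduction: one must verify that correlative sparsity really confines every objective and constraint monomial to a single clique (so that the off-pattern entries of $M$ are genuinely unconstrained), and that adjoining the constant-monomial index keeps the pattern chordal with the stated cliques. This is exactly why the identity is special to the first order — at higher orders the moment matrix contains products of degree $\ge 2$ that couple distinct cliques, the completion theorem can no longer reconcile the blocks, and the sparse relaxation becomes strictly weaker, as remarked in general before Assumption~\ref{as:Lasserre}. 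Combining $a=b$ with Theorem~\ref{thm:equivalence} identifies all three objects, completing the proof.
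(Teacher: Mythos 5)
Your proof is correct, and it has the same two-step skeleton as the paper's: use Theorem~\ref{thm:equivalence} to identify [OP$_2$-H$_1$]$^*$ with the Lavaei--Low dual, then show that the sparse and dense first-order relaxations have equal value. The difference lies entirely in how the second step is handled. The paper disposes of it by citation -- the equivalence of sparse and dense order-1 relaxations for non-convex quadratic programs, Section~4.5 of Waki et al.~\cite{WKKM} -- whereas you actually prove it: the cone inclusion $\mathcal{K}^{2}_{\mathcal{G}}(I)\subseteq\mathcal{K}^{2}_{\mathcal{G}}$ gives $a\le b$; on the moment side, the observation that for a degree-two problem every objective and constraint monomial lies inside a single maximal clique (so all off-pattern second moments are free), together with the chordal positive-semidefinite completion theorem applied to the pattern augmented by a universal vertex for the constant monomial, gives $a'=b'$; and weak duality plus the zero-duality-gap statement of Proposition~\ref{thm:convergent}(c)(ii) close the chain $b\le b'=a'=a\le b$. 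This is in substance the argument underlying the result the paper cites (matrix completion is exactly the mechanism in \cite{WKKM,KKMY}), so your route is not conceptually different -- it is the cited proof written out. What it buys is a self-contained verification of the two hypotheses the citation leaves implicit: that adjoining the constant-monomial index preserves chordality with maximal cliques $\{1\}\cup I_k$, and that correlative sparsity confines all problem data to the cliques, which is precisely what breaks down at order $r\ge 2$ and explains why the identity is special to the first level. One caveat worth noting: your chain leans on Proposition~\ref{thm:convergent}(c)(ii), which the paper itself also obtains only by citation to \cite{WKKM}; using it is legitimate, since that proposition precedes this one, but it means your argument is self-contained only modulo that one duality fact.
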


\begin{proof}
Follows from the fact that sparse and dense SDP relaxation of order 1 are equivalent for non-convex quadratic optimization problems, as proven in Section 4.5 of Waki et al. \cite{WKKM} and the Theorem \ref{thm:equivalence}.
\end{proof}

\begin{remark}
For a fixed order $r$, the sparse hierarchy [OP$_2$-SH$_1$] has $O(\kappa^{2r})$ variables, where $\kappa$ is the maximum number of variables appearing in the objective or an inequality constraint of [PP]. The largest matrix inequality is of size $O(\kappa^r)$. This is in contrast to $O(n^{2r})$ variables and matrix variables of size $O(n^r)$ in the dense hierarchy [OP$_2$-H$_1$]. In case the [PP] is very sparse, i.e., $\kappa \ll n$, the size of the sparse hierarchy is vastly smaller then the dense one.
\end{remark}

\section{Numerical Results} \label{sec:results}
In order to illustrate the performance of the approaches, we improve the relaxation [OP$_2$-H$_1$]$^*$ iteratively using the inequality generation scheme (we refer to as DIGS), in some cases to proven optimality. Next, we exploit the sparsity of the polynomial program to solve higher order relaxations, proving global optimality for further instances.\footnote{The two techniques are implemented in MATLAB running on a PC with a 3.5Ghz processor, running Red Hat Linux. The DIGS approach is implemented using APPS \cite{BissanThesis}. SparseColO was used for exploiting sparsity in the master and the subproblem. Exploiting sparsity of polynomial program [OP$_4$] is done using SparsePoP \cite{WKKM2}. To solve the resulting SDP relaxation for both approaches, SeDuMi \cite{Sturm} is used as the SDP solver. All test instances are taken from \cite{Bukhsh2013} and \cite{powerwebsite}. For the polish instances the formulation in \cite{Molzahn2011} is used to take into account multiple generators and transformers with off-nominal voltage ratios and phase shifts.} Figure \ref{fig:opfrelaxations} summarizes the formulations and relaxations discussed in this paper.
\begin{figure}[tp]\centering
\withdvi{\includegraphics[trim=0.5cm 2.5cm 8cm 4cm, clip=true, width = 0.5\textwidth]{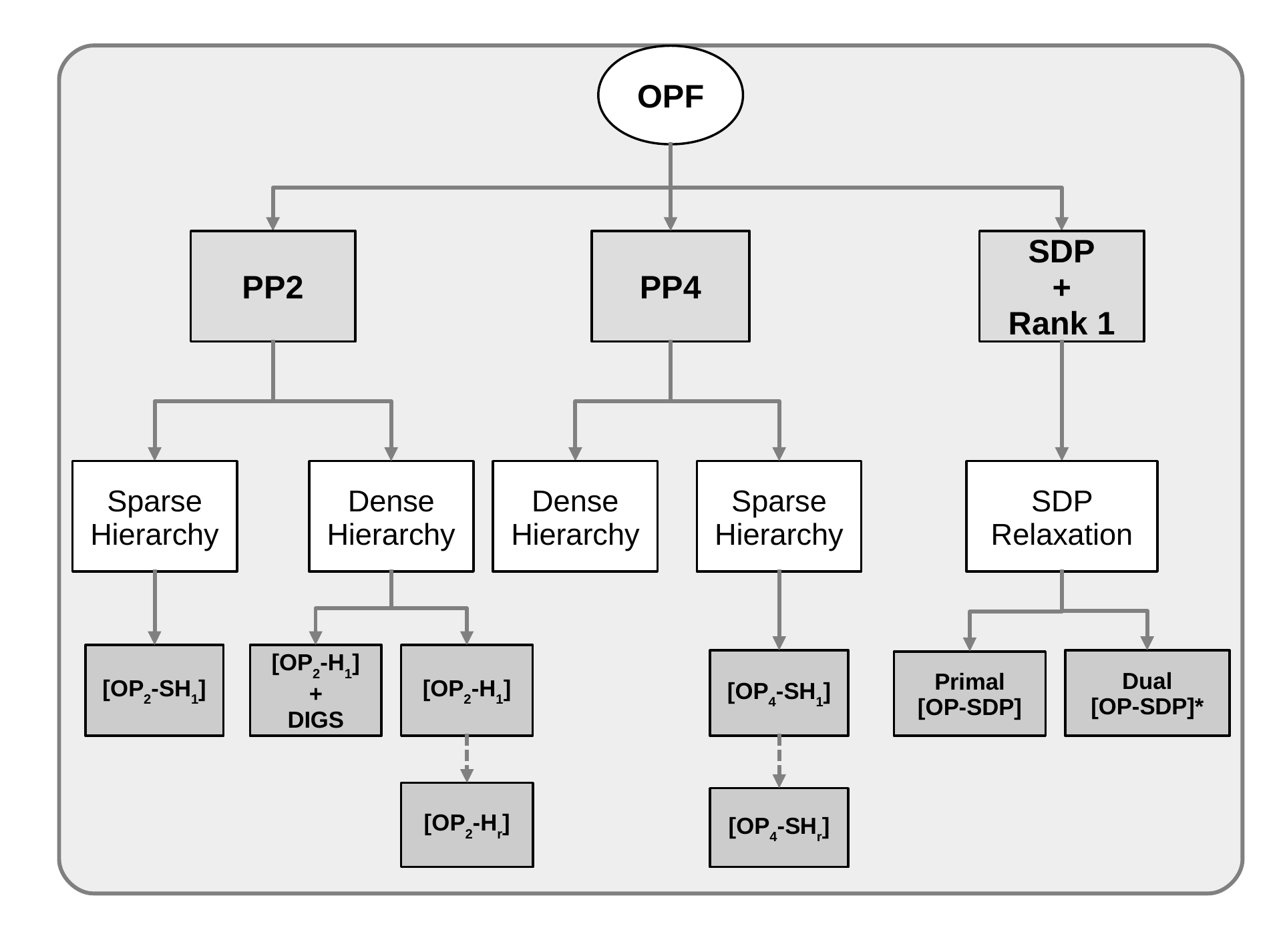}}
\caption{The different formulations and their relaxations.
}\label{fig:opfrelaxations}
\end{figure}

\subsection{Small-scale Instances}
First, we use three small test cases, devised by Bukhsh et al. \cite{Bukhsh2013} and Lesieutre et al. \cite{Lesieutre2011} such that the Lavaei-Low SDP relaxation is not optimal. In Tables~\ref{tb:WB2}--\ref{tb:WB5}, bold entries indicate that our approaches prove global optimality.

\begin{example}\label{ex-WB2}
The first example is an instance with two buses \cite{Bukhsh2013}, where the maximum voltage on the second bus varies from 0.976 to 1.028 (see Table \ref{tb:WB2}). From Figure \ref{fig:WB2}, it can be seen that after adding one inequality (i.e., one iteration of DIGS), the Lavaei-Low bound improves significantly. The optimal value is obtained in 8 iterations. For none of the 9 instances, MATPOWER converged and the solution MATPOWER found was far from the optimal. For example, for  $V_2^{\max}$=1.022, the value of 713.27 is reported. 
\begin{table}[t]
\caption{WB2 computational results.}
\setlength{\tabcolsep}{1pt}
{\label{tb:WB2}
\begin{tabular}{l|ccc||cccccc}
&\multicolumn{3}{c}{DIGS}&\multicolumn{2}{c}{[OP$_2$-SH$_1$]}&\multicolumn{2}{c}{[OP$_4$-SH$_1$]} &\multicolumn{2}{c}{[OP$_4$-SH$_2$]}\\ \hline
$V_2^{\max}$&  Iter $s$& $s$ & Time&Bound& Time&Bound& Time&Bound& Time\\
\hline\hline
0.976	&		\textbf{905.76}	&	1	&	0.9	&	\textbf{905.76}	&	0.2	&	\textbf{905.76}	&	0.4	&	&	\\	
0.983	&		\textbf{905.73}	&	6	&	5.1	&	903.12	&	0.2	&	\textbf{905.73}	&	1.8	&	&	\\	
0.989	&		\textbf{905.73}	&	6	&	4.3	&	900.84	&	0.1	&	905.72	&	1.7	&	\textbf{905.73}	&	1.8\\
0.996	&		\textbf{905.73}	&	6	&	4.6	&	898.17	&	0.2	&	905.73	&	1.4	&	\textbf{905.73}	&	1.6\\
1.002	&		\textbf{905.73}	&	6	&	4.8	&	895.86	&	0.1	&	905.72	&	1.8	&	\textbf{905.73}	&	1.5\\
1.009	&		\textbf{905.73}	&	8	&	6.4	&	893.16	&	0.2	&	905.71	&	1.9	&	\textbf{905.73}	&	0.6\\
1.015	&		\textbf{905.73}	&	6	&	4.7	&	890.82	&	0.1	&	905.71	&	0.8	&	\textbf{905.73}	&	0.6\\
1.022	&		\textbf{905.73}	&	8	&	6.5	&	888.08	&	0.1	&	905.71	&	2.6	&	\textbf{905.73}	&	1.7\\
1.028	&		\textbf{905.73}	&	8	&	5.1	&	885.71	&	0.1	&	904.59	&	0.8	&	\textbf{905.73}	&	0.8\\
\hline
\end{tabular}}
\end{table}
\begin{figure}[tp]\centering
\withdvi{\includegraphics[trim=1cm 7.5cm 1cm 8.5cm, clip=true, width = 0.5\textwidth]{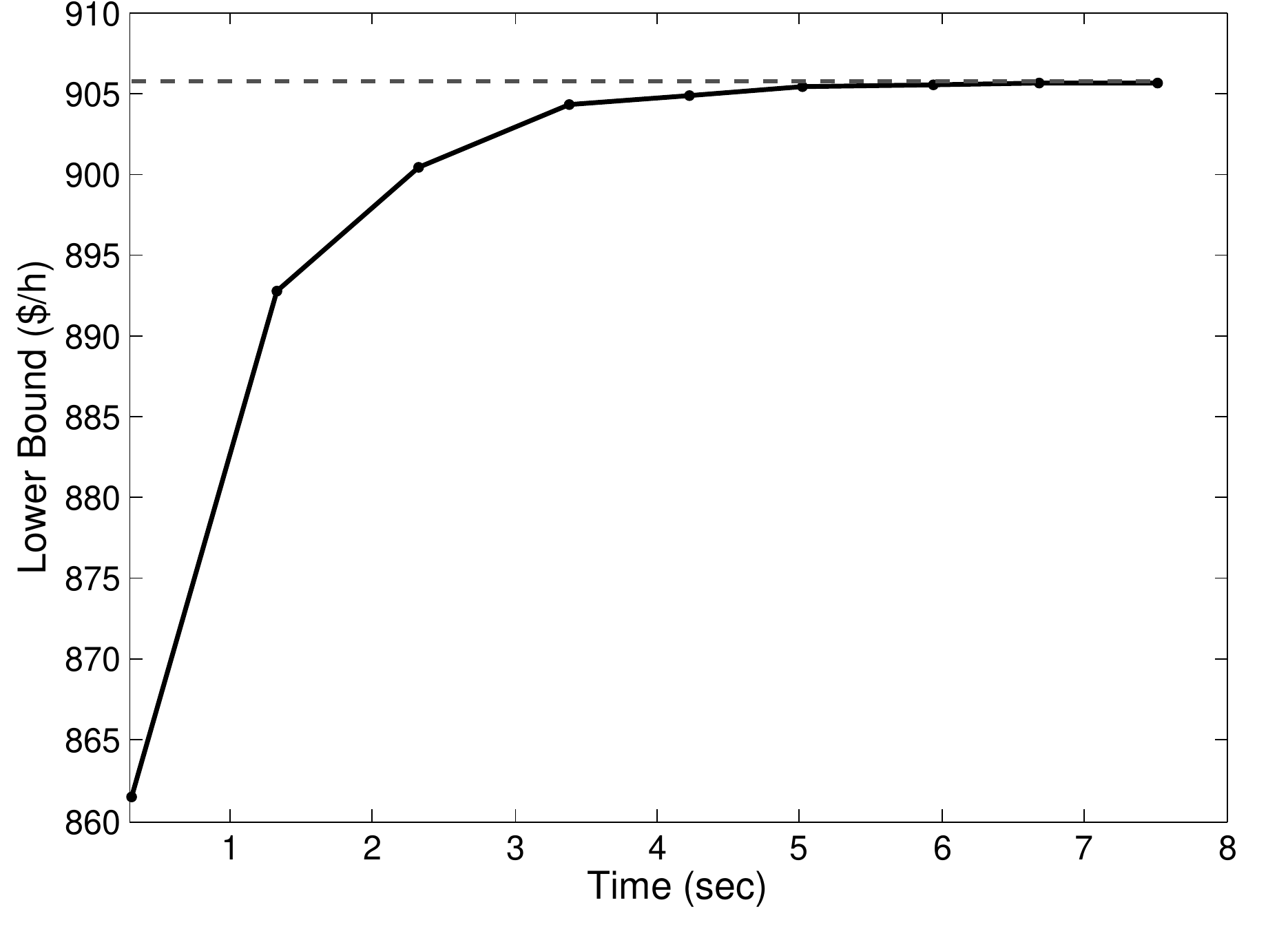}}
\caption{WB2 Bounds for $V_2^{\max}$=1.022.}\label{fig:WB2}
\end{figure}
\end{example}

\begin{example}\label{ex-LMBM3}
The second example is LMBM3 with 3 buses \cite{Lesieutre2011}. The results from Table \ref{tb:LMBM3} indicate that both approaches are successful in providing the optimal solution of these problems when Lavaei-Low relaxation is not exact. In this example, MATPOWER reported the optimal solution for all 10 instances.
\begin{table}[tp]
\caption{LMBM3 computational results.}
\setlength{\tabcolsep}{3pt}
{\label{tb:LMBM3}
\begin{tabular}{l|ccc||cccc}
&\multicolumn{3}{c}{DIGS}&\multicolumn{2}{c}{[OP$_2$-SH$_1$]}&\multicolumn{2}{c}{[OP$_4$-SH$_1$]}\\ \hline
$S_{23}^{\max}$&  Iter $s$& $s$ & Time&Bound& Time&Bound& Time\\
\hline\hline
28.35&\textbf{10294.88}&7&13.3	&6307.97	& 0.2	&\textbf{10294.88}	&	1.0\\
31.16&\textbf{8179.99}&6&11.2	&6206.78	& 0.2	&\textbf{8179.99}	&	0.7\\
33.96&\textbf{7414.94}&5&19.2	&6119.71	& 0.2	&\textbf{7414.94}	&	0.8\\
36.77&\textbf{6895.19}&5&19.5	&6045.33	& 0.3	&\textbf{6895.19}	&	0.7\\
39.57&\textbf{6516.17}&5&19.8	&5979.38	& 0.2	&\textbf{6516.17}	&	0.7\\
42.38&\textbf{6233.31}&5&18.1	&5919.12	& 0.2	&\textbf{6233.31}	&	0.7\\
45.18&\textbf{6027.07}&5&19.3	&5866.68	& 0.1	&\textbf{6027.07}	&	0.8\\
47.99&\textbf{5882.67}&3&12.1	&5819.02	& 0.2	&\textbf{5882.67}	&	0.7\\
50.79&\textbf{5792.02}&2&9.2	&5779.34	& 0.3	&\textbf{5792.02}	&	0.7\\
53.60&\textbf{5745.04}&1&0.7	&5745.04	& 0.2	&\textbf{5745.04}	&	0.8\\
\hline
\end{tabular}
}
\end{table}
 \end{example}
\begin{example}\label{ex-WB5}
The last example consists of 5 buses \cite{Bukhsh2013}. The results presented in Table \ref{tb:WB5} are consistent with the previous two examples. Applying DIGS and exploiting sparsity solved all test cases to optimality. MATPOWER provided the optimal solution for all 8 instances.
\begin{table}[tp]
\caption{WB5 computational results.}
\setlength{\tabcolsep}{3pt}
{\label{tb:WB5}
\begin{tabular}{l|ccc||cccc}
&\multicolumn{3}{c}{DIGS}&\multicolumn{2}{c}{[OP$_2$-SH$_1$]}&\multicolumn{2}{c}{[OP$_4$-SH$_1$]}\\ \hline
$Q_5^{\min}$& Iter $s$& $s$ & Time&Bound& Time&Bound& Time\\
\hline\hline
-20.51	&		\textbf{1146.48}	&	3	&	28.4	&	954.82	&	0.3	&	\textbf{1146.48}	&	25.5\\
-10.22	&		\textbf{1209.11}	&	4	&	32.6	&	963.83	&	0.3	&	\textbf{1209.11}	&	17.1\\
0.07	&		\textbf{1267.79}	&	5	&	49.0	&	972.80	&	0.2	&	\textbf{1267.44}	&	26.3\\
10.36	&		\textbf{1323.86}	&	5	&	49.4	&	981.89	&	0.4	&	\textbf{1323.86}	&	20.9\\
20.65	&		\textbf{1377.97}	&	4	&	39.1	&	990.95	&	0.2	&	\textbf{1377.97}	&	15.4\\
30.94	&		\textbf{1430.54}	&	4	&	40.1	&	1005.13	&	0.3	&	\textbf{1430.54}	&	20.9\\
41.23	&		\textbf{1481.81}	&	5	&	49.6	&	1033.07	&	0.3	&	\textbf{1481.81}	&	14.6\\
51.52	&		\textbf{1531.97}	&	5	&	49.7	&	1070.39	&	0.5	&	\textbf{1531.97}	&	18.2\\
\hline
\end{tabular}
}
\end{table}
 
\end{example}
\begin{table*}[t!]
\centering
\caption{Computational results for IEEE and Polish network instances.}
\setlength{\tabcolsep}{3pt}
{\label{tb:SP}
\begin{tabular}{l|r||rrr||rrr}
&MATPOWER&\multicolumn{3}{c||}{[OP$_2$-H$_1$]$^*$ + SparseColO }&\multicolumn{3}{c}{[OP$_4$-SH$_1$]}\\
Instance& Objective &Bound &Dim & Time& Bound & Dim & Time\\
\hline\hline
case9mod& 4267.07&2753.23 & 588$\times$168 & 0.6 & \textbf{3087.89}& 1792$\times$14847& 17.5\\
case14mod& 7806.10&7792.72 & 888$\times$94 & 0.9  & 7991.07&7508$\times$66740 & 904.2\\
case30mod& 623.01&576.89 & 4706$\times$684 & 3.8 &\textbf{578.56} & 36258$\times$49164& 13740.0\\
case39& 41864.18&41862.08 & 7282$\times$758 & 2.2 & \textbf{41864.18} & 26076$\times$215772&  4359.1\\
case57& 41737.79&41737.79 & 13366$\times$356& 3.2 &* & *\\
case118& 129660.69&129654.62 & 56620$\times$816& 6.1&* & *\\
case300& 719725.08&719711.63 & 362076$\times$1938 & 13.6 &* & *\\
case2383wp &  1.869$\times10^6$& 1.814$\times10^6$ & 22778705$\times$47975 & 3731.5  &* & *\\ 
case2736sp &1.308$\times10^6$ & 1.307$\times10^6$ & 30019740$\times$57408 & 3502.2  &* & *\\ 
\hline
\end{tabular}}
\end{table*}
\subsection{Large-scale Instances}
Next, we consider medium- and large-scale instances distributed with MATPOWER \cite{powerwebsite}. Table \ref{tb:SP} presents MATPOWER objective function value in addition to computational results for [OP$_2$-H$_1$]$^*$ with SparseColO and [OP$_4$-SH$_1$] using SparsePoP. [OP$_2$-H$_1$]$^*$ captures the Lavaei-Low dual relaxation and obtains the same bounds and has similar computational performance. The computational time of [OP$_2$-H$_1$]$^*$ can be significantly improved using SparseColO which utilizes domain-space sparsity of a semidefinite matrix variable and range-space sparsity of a linear matrix inequality constraint. It is used as a preprocessor, which reduces the dimension of matrix variables in an SDP relaxation before applying SeDuMi. For instances larger than 39 buses only [OP$_2$-H$_1$]$^*$ can be solved, as [OP$_4$-SH$_1$] becomes computationally expensive for SparsePoP. Using DIGS, optimality of case9mod is proven in 3 hours and 7 iterations. For case14mod, DIGS performed 2 iterations within 5 hours and improved upon the Lavaei-Low bound. For instances up to 2736 buses one can solve [OP$_2$-H$_1$]$^*$ within an hour, but the generation of cutting surfaces becomes too consuming.

 \inTR{The first instance is on 9 buses, [OP$_2$-H$_1$]$^*$ provides a lower bound on the optimal value in 0.6 sec, whereas [OP$_4$-SH$_1$] provides the optimal solution in 17 sec. Using DIGS, the optimal value is obtained in 7 iterations and 3 hours of computational time. For case14, 
[OP$_4$-SH$_1$] is solved to optimality in 1420 sec with an optimal cost of \$8081.54 and an error less than $10^{-6}$. Using [OP$_2$-H$_1$]$^*$, the optimal objective (\$8081.52) is obtained at the first level of the hierarchy in 0.4 sec and Lavaei-Low relaxation obtains the same optimal value in 0.7 sec. For case14mod, Lavaei-Low bound was improved by using [OP$_4$-SH$_1$] and a better objective is obtained in this case. DIGS is also able to obtain a better bound in 2 iterations and 5 hours of computational time. For the case with 30 buses, an objective value of \$578.56 is obtained in 3 hours using [OP$_4$-SH$_1$], while an objective value of \$576.89 can be obtained in 3 seconds using [OP$_2$-H$_1$]$^*$ for [OP$_2$]. Using Lavaei-Low relaxation, an objective value of \$576.89 is obtained in 20 sec for the primal and 2 sec for the dual. DIGS was not able to improve on the relaxation due to time limitations. }


\section{Conclusion}\label{sec:conclusion}
In this work, we propose to formulate the optimal power flow as a polynomial programming problem
problem and present two techniques for deriving relaxations stronger than that of Lavaei and Low \cite{lavaei2012zero}. 
For several instances on up to 39 buses, where the Lavaei-Low relaxation is not exact, we provide provide globally optimal solutions for the first time. 
Furthermore, we show that the strong relaxations are tractable for medium- and large-scale instances.

The approaches are rather general. They make it possible to use arbitrary multivariate polynomials in the objective function and constraints, 
without the need to change the solver. 
Additionally, binary variables can be included, e.g., to model discrete decisions in transmission switching. 

\bibliographystyle{abbrv}
\bibliography{paper_bib}

\clearpage
\newpage
\appendix
\begin{proof}[Proof of Theorem \ref{thm:equivalence}]
Notice that the variables in [OP$_2$] are $x$, $P^g_k$, and $P_{lm}$ and $Q_{lm}$. However, not all the monomials appear in the polynomial formulation and hence using the first level of the hierarchy, $\mathcal{{K}}^{r}_G$, where $r=2$ one can approximate [OP$_2$]:{\small
\begin{align*}
\begin{split}
\max \quad & \varphi \\
\stt & \sum_{k\in G} \left(c^2_k(P_k^g)^2+c^1_k(P_k^d + \text{tr}(Y_kxx^T))+c^0_k\right) - \varphi \\
&= A(x) + \sum_{k\in G}B_k(P^g_k)+ \sum_{(l,m) \in L}C_{lm}(P_{lm},Q_{lm}) \\
&+ \sum_{k \in N} \overline{\lambda}_k( P_k^{\max}-P_k^d -\text{tr}(Y_kxx^T)) \\ &
+ \sum_{k \in N} \underline{\lambda}_k( -P_k^{\min}+P_k^d +\text{tr}(Y_kxx^T)) \\
&+ \sum_{k \in N} \overline{\gamma}_k( Q_k^{\max}-Q_k^d -\text{tr}(\bar{Y}_kxx^T)) \\
&+ \sum_{k \in N} \underline{\gamma}_k( -Q_k^{\min}+Q_k^d +\text{tr}(\bar{Y}_kxx^T)) \\
& +\sum_{k \in N} \overline{\mu}_k( (V_k^{\max})^2-\text{tr}(M_kxx^T))  \\
&+ \sum_{k \in N} \underline{\mu}_k( (-V_k^{\min})^2+\text{tr}(M_kxx^T)) \\
& +\sum_{(l,m) \in L} a_{lm} ((S_{lm}^{\max})^2  -P_{lm}^2-Q_{lm}^2) \\
&+  \sum_{k \in G}b_k (P^g_k-  \text{tr}(Y_kxx^T)-P_k^d) \\
& +\sum_{(l,m) \in L} c_{lm} (P_{lm} - \text{tr}(Y_{lm}xx^T))\\
&+\sum_{(l,m) \in L} d_{lm} (Q_{lm} - \text{tr}(\bar{Y}_{lm}xx^T))
\end{split}
\end{align*}}
where $A(x), B_k(P^g_k), C_{lm}(P_{lm},Q_{lm})$ are polynomials that are sum of squares as a function of $x$, $P^g_k$, and $P_{lm}$ and $Q_{lm}$ respectively. That is $A(x)=xAx^T$, $B_k(P^g_k)=  \left[ \begin{matrix} 1 \\ P^g_k \end{matrix}  \right]B_k  \left[ \begin{matrix} 1 \\ P^g_k \end{matrix}  \right]^T,$ and $C_{lm}(P_{lm},Q_{lm})=  \left[ \begin{matrix} 1 \\ P_{lm} \\ Q_{lm} \end{matrix}  \right] C_{lm} \left[ \begin{matrix} 1 \\ P_{lm} \\ Q_{lm} \end{matrix}  \right]^T,$ where $A, B_k,$ and $C_{lm}$ are positive semidefinite matrices of dimension $2|N|\times 2|N|$, $2\times 2$ and $3\times 3$ respectively. The variables $\overline{\lambda}_k, \underline{\lambda}_k, \overline{\gamma}_k, \underline{\gamma}_k, \overline{\mu}_k, \underline{\mu}_k$, and $a_{lm}$,are non-negative variables and $b_k, c_{lm},$ and $d_{lm}$ are free variables. By equating the coefficients of the monomials of the above problem, we rewrite it as{\small
\begin{align*}
\begin{split}
\max  \quad& \varphi \\
\stt & \sum_{k \in G} c^1_kP^d_k +\sum_{k \in G} c^0_k - \varphi \\
&= \sum_{k\in G}B_k^{00}+ \sum_{(l,m) \in L}C_{lm}^{00} + \sum_{k \in N} \overline{\lambda}_k (P_k^{\max}-P_k^d )  \\
&  + \sum_{k \in N} \underline{\lambda}_k( -P_k^{\min}+P_k^d )+ \sum_{k \in N} \overline{\gamma}_k( Q_k^{\max}-Q_k^d ) \\
&+ \sum_{k \in N} \underline{\gamma}_k( -Q_k^{\min}+Q_k^d )   + \sum_{k \in N} \overline{\mu}_k( V_k^{\max})^2 \\
&- \sum_{k \in N} \underline{\mu}_k(V_k^{\min})^2 + \sum_{(l,m) \in L} a_{lm}(S_{lm}^{\max})^2- \sum_{k\in G}b_kP_k^d \\
 & \sum_{k \in N}c_1^k Y_k = A-\sum_{k \in N}\left(\overline{\lambda}_k Y_k +\underline{\lambda}_k Y_k -\overline{\gamma}_k \bar{Y}_k \right. \\& +\left.\underline{\gamma}_k \bar{Y}_k - \overline{\mu}_k (V_k^{\max})^2   +\underline{\mu}_k (V_k^{\min})^2 - b_k Y_k \right) \\&-\sum_{(l,m) \in L} \left( c_{lm} Y_{lm}  + d_{lm} \bar{Y}_{lm} \right)\\
 & 0 = 2B^{12}_k+b_k \qquad c_k^2 = B^{22}_k \\
 & 0 = c_{lm} +2C_{lm}^{12} \qquad 0 = d_{lm} +2C_{lm}^{13} \\
  & 0 = 2C_{lm}^{23} \qquad\qquad\quad  0=-a_{lm}+ C^{22}_{lm} \\& 0 = -a_{lm} +C^{33}_{lm}  \\
  & A, B_k, C_{lm} \succeq 0.
  \end{split}
\end{align*}}
By substituting some of the variables:{\small
\begin{align*}
\begin{split}
\textbf{[OP$_2$-H$_1$]$^*$ }\\
\max  \quad& \sum_{k \in G} c^1_kP^d_k +\sum_{k \in G} c^0_k -\sum_{k\in G}B_k^{00}- \sum_{(l,m) \in L}C_{lm}^{00} \\
&- \sum_{k \in N} \overline{\lambda}_k (P_k^{\max}-P_k^d )    - \sum_{k \in N} \underline{\lambda}_k( -P_k^{\min}+P_k^d )\\
&- \sum_{k \in N} \overline{\gamma}_k( Q_k^{\max}-Q_k^d ) - \sum_{k \in N} \underline{\gamma}_k( -Q_k^{\min}+Q_k^d ) \\
&  - \sum_{k \in N} \overline{\mu}_k( V_k^{\max})^2 +\sum_{k \in N} \underline{\mu}_k(V_k^{\min})^2 \\
&- \sum_{(l,m) \in L}  C^{22}_{lm}(S_{lm}^{\max})^2 -\sum_{k\in G} 2B^{12}_kP_k^d \\
\stt & A = \sum_{k \in N} \left(c_1^k Y_k +\overline{\lambda}_k Y_k -\underline{\lambda}_k Y_k +\overline{\gamma}_k \bar{Y}_k \right. \\
&\left. -\underline{\gamma}_k \bar{Y}_k+ \overline{\mu}_k (V_k^{\max})^2 -\underline{\mu}_k (V_k^{\min})^2 -2B^{12}_k Y_k \right) \\
&- \sum_{(l,m) \in L} \left(2C_{lm}^{12} Y_{lm}  +2C_{lm}^{13} \bar{Y}_{lm}\right) \\
  &c_k^2 = B^{22}_k \qquad 0 = 2C_{lm}^{23} \\
  & C^{22}_{lm}- C^{33}_{lm} =0 \\
  & A, B_k, C_{lm} \succeq 0.
  \end{split}
\end{align*}}
which is equivalent to optimization problem 4 described in \cite{lavaei2012zero}, i.e., the dual of [OP-SDP].
\end{proof}

\end{document}